\documentclass[12pt]{article}
\usepackage{amssymb,amsmath,bm}

\usepackage[colorlinks=true, urlcolor=blue,linkcolor=blue, citecolor=blue]{hyperref}
\usepackage{mathrsfs,verbatim}
\usepackage{caption,cleveref }
\usepackage{graphicx, float}
\usepackage{color, mathtools, tikz, xcolor}
\usepackage{enumerate}
\usepackage{tikz}
\usetikzlibrary{shapes.geometric}
\usepackage{pdfpages}

\usepackage{caption}

\usetikzlibrary{positioning}
\usetikzlibrary{arrows}

\usepackage{subcaption}

\usetikzlibrary{calc}

\DeclareMathOperator{\gpf}{gpf}

\newcommand{\bea}{\begin{eqnarray}}
\newcommand{\ena}{\end{eqnarray}}
\newcommand{\beas}{\begin{eqnarray*}}
\newcommand{\enas}{\end{eqnarray*}}
\newcommand{\beq}{\begin{equation}}
\newcommand{\enq}{\end{equation}}

\newcommand{\ignore}[1]{}


\newtheorem{theorem}{Theorem}[section]
\newtheorem{corollary}[theorem]{Corollary}

\newtheorem{lemma}[theorem]{Lemma}
\newtheorem{definition}[theorem]{Definition}

\frenchspacing
 


\usepackage{lipsum}
\usepackage{enumerate}
\usepackage{amssymb}
\usepackage{amsmath}

\numberwithin{equation}{section}

\begin{document}

\title{ Large Zsigmondy Primes}


\author{Ömer Avcı\footnote{Department of Mathematics, Bo\u{g}azi\c{c}i University, 34342, Bebek, Istanbul, Turkey}}

\maketitle
\begin{abstract}
If $a>b$ and $n>1$ are positive integers and $a$ and $b$ are relatively prime integers, then a \emph{large Zsigmondy prime} for $(a,b,n)$ 
is a
prime $p$ such that $p \,|\, a^n-b^n$ but $p \,\nmid \, a^m-b^m$
  for $1 \leq m < n$ and either $p^2 \, | \, a^n - b^n$ or
$ p > n + 1$. We classify all the triples of integers $(a, b, n)$ for which no large Zsigmondy prime exists.

\end{abstract}

\section{Introduction}

Let $a>b$ be relatively prime positive integers and $n$ be a positive integer. A Zsigmondy prime for
$(a, b, n)$ is defined as a prime $p$ such that $p \mid a^n-b^n$ but $p \nmid a^m-b^m$ for $1 \leq m < n$. Zsigmondy's Theorem asserts that Zsigmondy primes exist for all triples $(a,b,n)$ except when $(a,b, n)$ = $(2,1,6)$ or $n = 2$ and $a +b = 2^k$ for some positive integer $k$ (see \cite{zsigmo}). Zsigmondy's Theorem was independently, but later, discovered by Birkhoff and Vandiver \cite{vandiver}. \\

In \cite{feit}, Feit deals with the special case of Zsigmondy's Theorem when $b=1$ and defines a large Zsigmondy prime for the pair $(a,n)$ as a 
prime $p$ such that $p \mid a^n-1$ but $p \nmid a^m-1$ for $1 \leq m < n$ and either $p^2 \mid a^n -1$ or
$ p > n + 1$. \\

In our paper, we present a generalised version of Feit's results. 

\begin{theorem} \label{Main}
If $a>b$ are relatively prime positive integers and $n$ is an integer greater than 1, then there exists a large Zsigmondy prime for $(a,b,n)$ except the following cases.
\begin{enumerate}[(i)]
\item  $n=2$ and $a+b= 2^s$ or $a+b=3\cdot2^s$ where $s$ is a non-negative integer. 
\item  $n=4$ and $(a,b)$ is $(2,1)$ or $(3,1)$. 
\item  $n=6$ and  $(a,b)$ is one of the following $\{(2,1),(3,1),(3,2),(5,4)\}$. 
\item  $n \in \{10,12,18\}$ and $(a,b)=(2,1)$. 

\end{enumerate}
\end{theorem} 
Artin's results about orders of linear groups (see 
\cite{artin}) also inspired Feit's work about the existence of 
large Zsigmondy primes. The motivation for Feit's work comes 
from the theory of finite groups \cite{feit3}. Feit proved the
existence of large Zsigmondy primes in all cases except for finitely many, as stated in \cite{feit2}, for the special case $a \geqslant 3$. Later on,
he came up with a simpler proof of his result, which also 
includes the case where $a=2$, as presented in \cite{feit}.
Roitman also provided a nice proof of Feit’s result in \cite{roitman}. 

For relatively prime positive integers $a>b$, we can 
generalize the definition of a large Zsigmondy prime as a 
prime $p$ such that $p \mid a^n-b^n$, but $p \nmid a^m-b^m$
  for $1 \leq m < n$ and either $p^2 \mid a^n -b^n$ or
$ p > n + 1$. 
We show that there exists a large Zsigmondy prime for $(a, b, n)$  except in the cases presented in Theorem \ref{Main}. Our proof is inspired by the elegant proof 
of Zsigmondy's Theorem given by Yan Sheng in \cite{yansheng}.

\section{Preliminaries}

\begin{lemma}\label{sqrtineq} \cite{feit}
For any positive integer 
$n$, where 
$\phi(n)$ denotes Euler's totient function, it holds that:
$$\phi(n) \geqslant \frac{1}{2} \sqrt{n}.$$
\end{lemma}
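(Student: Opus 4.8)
The plan is to prove the inequality $\phi(n) \geqslant \frac{1}{2}\sqrt{n}$ by exploiting the multiplicativity of $\phi$ and reducing to a check on prime powers. Since both sides are multiplicative-friendly — $\phi$ is multiplicative, and $\frac{1}{2}\sqrt{n}$ factors as $\frac{1}{2}\prod \sqrt{p^{a}}$ over the prime-power decomposition $n=\prod p^{a}$ — the natural strategy is to write $f(n)=\frac{\phi(n)}{\sqrt{n}}$ and show $f$ is multiplicative, then bound $f$ on prime powers.

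First I would factor $n=\prod_{i} p_i^{a_i}$ and use $\phi(n)=\prod_i p_i^{a_i-1}(p_i-1)$ together with $\sqrt{n}=\prod_i p_i^{a_i/2}$ to obtain
\begin{equation}
\frac{\phi(n)}{\sqrt{n}}=\prod_i \frac{p_i^{a_i-1}(p_i-1)}{p_i^{a_i/2}}=\prod_i p_i^{a_i/2-1}(p_i-1).
\end{equation}
So it suffices to understand the single-prime-power factor $g(p,a)=p^{a/2-1}(p-1)$. The key observation is that for any odd prime $p$ we have $p-1\geqslant \sqrt{p}$ (equivalently $(p-1)^2\geqslant p$, which holds for $p\geqslant 3$), hence $g(p,a)=p^{a/2-1}(p-1)\geqslant p^{a/2-1}\cdot p^{1/2}=p^{(a-1)/2}\geqslant 1$ for every odd prime $p$ and every $a\geqslant 1$. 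Thus every odd prime-power factor contributes a factor $\geqslant 1$ to the product.

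The main obstacle is the prime $p=2$, where $p-1=1<\sqrt{2}$, so the factor for the $2$-part can be less than $1$. I would isolate this: if $n=2^{a}m$ with $m$ odd, the product over odd primes is $\geqslant 1$ by the previous step, so it remains to bound the $2$-factor $g(2,a)=2^{a/2-1}$. For $a\geqslant 2$ this is already $\geqslant 1$ and there is nothing to lose; the only genuinely small case is $a=1$ (or $a=0$), where $g(2,1)=2^{-1/2}=\frac{1}{\sqrt{2}}$. Combining, $\frac{\phi(n)}{\sqrt{n}}\geqslant \frac{1}{\sqrt 2}>\frac12$ in that worst case, and $\geqslant 1>\frac12$ otherwise, which establishes $\phi(n)\geqslant \frac12\sqrt n$ with room to spare.

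An alternative, perhaps cleaner, write-up avoids case analysis by the uniform bound $p^{a/2-1}(p-1)\geqslant \tfrac{1}{2}$ for every prime power (including $p=2$, $a=1$, where it equals $\tfrac{1}{\sqrt2}\geqslant\tfrac12$), observing that at most one prime — namely $2$ — forces a factor below $1$, so the full product drops below $1$ by at most the single factor $\tfrac{1}{\sqrt2}$. I expect the only subtlety worth stating carefully is the verification that no \emph{two} independent sources can push the product below $\tfrac12$; since every odd prime and every exponent $a\geqslant 2$ contributes at least $1$, the bound $\tfrac12$ is safe and in fact not tight except in spirit. I would close by noting equality-type behavior is never attained, so the inequality is strict for $n\geqslant 2$ while trivially holding for $n=1$.
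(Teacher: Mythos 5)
Your proposal is correct and follows essentially the same route as the paper: both reduce via multiplicativity of $\phi$ to a bound on prime powers, namely $\phi(p^{b})\geqslant p^{b/2}$ for odd $p$ and a slightly weaker bound at $p=2$, which is the unique prime power that can contribute a factor below $1$. Your write-up merely normalizes by $\sqrt{n}$ and sharpens the $2$-part constant from $\tfrac12$ to $\tfrac{1}{\sqrt{2}}$, which is a cosmetic refinement rather than a different argument.
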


\begin{lemma}[Lifting the Exponent Lemma - LTE]
For a prime $p$ and a positive integer $n$, let $v_p(n)$ denote the exponent of $p$ in the prime factorisation of $n$. 
Let $x$ and $y$ be integers such that 
$x \equiv y\not\equiv0 \pmod p.$
\begin{enumerate}[(1)]
\item If $p\geqslant3$, then
\begin{align*}
v_p(x^n-y^n) = v_p(x-y) + v_p(n).
\end{align*}\\
\item If $p=2$, then \begin{align*}
    v_2(x^n-y^n)=
    \begin{cases}
       v_2(x^2-y^2)+v_2(n)-1, & \text{if n is even}\\\
      v_2(x-y), & \text{if n is odd.}
    \end{cases}
  \end{align*}

\end{enumerate}

\end{lemma}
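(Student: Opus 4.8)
The plan is to reduce both parts to two elementary building blocks and then bootstrap by induction on $v_p(m)$. The one tool used everywhere is the factorisation
\begin{align*}
x^m - y^m = (x-y)\sum_{i=0}^{m-1} x^{i} y^{m-1-i}.
\end{align*}
First I would dispose of the case in which $p$ does not divide $m$. Since $x \equiv y \pmod p$ and $p \nmid y$, every summand satisfies $x^{i} y^{m-1-i} \equiv y^{m-1} \pmod p$, so the second factor is congruent to $m\,y^{m-1} \not\equiv 0 \pmod p$. Hence $v_p(x^m - y^m) = v_p(x-y)$ whenever $p \nmid m$; in particular this already settles the odd-$m$ branch of part (2), since for $p=2$ and $m$ odd the second factor is odd.

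For an odd prime $p$, the core computation is the single identity $v_p(x^p - y^p) = v_p(x-y) + 1$. Writing $x = y + k p^{s}$ with $s = v_p(x-y) \geq 1$ and $p \nmid k$, I would expand $(y + k p^{s})^p$ by the binomial theorem. The linear term contributes $k\,y^{p-1} p^{s+1}$, of valuation exactly $s+1$, whereas for $2 \leq j \leq p$ the $j$-th term has valuation $v_p\!\binom{p}{j} + js$. Using $v_p\!\binom{p}{j} \geq 1$ for $1 \leq j \leq p-1$ and checking the $j=p$ term separately, each of these exceeds $s+1$; this is precisely where $p \geq 3$ is needed, so that $\binom{p}{2}$ does not interfere. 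Thus $v_p(x^p - y^p) = s+1$. Writing $m = p^{a}\ell$ with $p \nmid \ell$ and applying the coprime case once and then this identity $a$ times yields $v_p(x^m - y^m) = v_p(x-y) + a = v_p(x-y) + v_p(m)$, completing part (1).

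The case $p = 2$ is where the extra care is required, and I expect it to be the main obstacle. Writing $m = 2^{a}\ell$ with $\ell$ odd and $a \geq 1$, the difficulty is that squaring introduces a factor $x + y$ rather than merely raising the valuation by one. Working with the odd base $X = x^{\ell}$, $Y = y^{\ell}$, I would first record
\begin{align*}
v_2\bigl(X^2 - Y^2\bigr) = v_2(X - Y) + v_2(X + Y) = v_2(x-y) + v_2(x+y) = v_2(x^2 - y^2),
\end{align*}
where the odd-exponent coprime case is applied to $X - Y = x^{\ell} - y^{\ell}$ and, via $-y$, to $X + Y = x^{\ell} + y^{\ell}$. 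The remaining squarings behave regularly: for $k \geq 1$ one has $X^{2^{k}} + Y^{2^{k}} \equiv 2 \pmod 4$, since odd squares are $\equiv 1 \pmod 4$, so each further squaring raises the $2$-adic valuation by exactly one. Inducting on $a$ then gives $v_2(x^m - y^m) = v_2\bigl(X^{2^{a}} - Y^{2^{a}}\bigr) = v_2(x^2 - y^2) + (a-1) = v_2(x^2 - y^2) + v_2(m) - 1$, which is part (2) for even $m$. The delicate point throughout is the bookkeeping of exactly where the single extra factor of $x+y$ enters and confirming that it enters only once, since any miscount reproduces the familiar off-by-one discrepancy in the $p=2$ formula.
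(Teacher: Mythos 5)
The paper does not actually prove this lemma---it states LTE as ``well-known in the literature'' and uses it as a black box---so there is no in-paper argument to compare against; your proposal supplies a correct, self-contained proof along the standard lines. Your three building blocks all check out: for $p\nmid m$ the cofactor $\sum_{i=0}^{m-1}x^iy^{m-1-i}\equiv m\,y^{m-1}\not\equiv 0\pmod p$, which simultaneously handles the odd-$m$ branch of part (2); for the core identity, writing $x=y+kp^s$ with $s=v_p(x-y)\geq 1$, the $j=1$ binomial term has valuation exactly $s+1$, the terms $2\leq j\leq p-1$ have valuation at least $1+2s\geq s+2$, and the $j=p$ term has valuation $ps\geq s+2$ precisely because $p\geq 3$. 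One cosmetic quibble: the genuine obstruction at $p=2$ is that $j=p$ term, whose valuation $2s$ can equal $s+1$ when $s=1$, rather than $\binom{p}{2}$ as you phrase it---but since you do check $j=p$ separately, the argument is unaffected. Your even-$m$ branch is also sound: applying the odd-exponent case to $x^\ell-y^\ell$ and, via the substitution $y\mapsto -y$ (legitimate since both are odd, so $x\equiv -y\pmod 2$), to $x^\ell+y^\ell$, gives $v_2(X^2-Y^2)=v_2(x^2-y^2)$, and the observation $X^{2^k}+Y^{2^k}\equiv 2\pmod 4$ for $k\geq 1$ shows each subsequent squaring raises the valuation by exactly one, producing the correct count $v_2(x^2-y^2)+v_2(m)-1$. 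This is exactly the proof the paper implicitly relies on, and it would be a reasonable appendix to the paper as written.
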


\begin{definition} (Cyclotomic Polynomials)
For any positive integer 
$n$, the 
$n$-th cyclotomic polynomial 
$\Phi_n(x)$ is given by:
\begin{equation*}
    \Phi_n(x) = \prod_{\substack{\gcd(k,n)=1 \\ 1\leqslant k \leqslant n} } (x-e^{2 i \pi \frac{ k}{n}}).
\end{equation*}
It is known that $\Phi_n(x)$ is a monic polynomial with integer coefficients.

\end{definition}
\begin{definition}\label{tanim}
There is a generalization of cyclotomic polynomials into two variables: 
\begin{equation*}
\Phi_n(a,b) = b^{\phi(n)} \Phi_n \Big(\dfrac{a}{b} \Big).
\end {equation*}
We can also express $\Phi_n(a,b)$ as
\begin{equation*}
\Phi_n(a,b) = \prod_{\substack{\gcd(k,n)=1 \\ 1\leqslant k \leqslant n} } (\,a-b \, e^{2 i \pi \frac{ k}{n}} \,).
\end {equation*}
It is clear that $\Phi_n(x,y)$ is a two variable polynomial with integer coefficients.
\end{definition}

\begin{lemma}\cite{yansheng} \label{cyclofactorization}
 Let $n$ be a positive integer. Then,
\begin{equation*}
x^n-1 = \prod_{d \mid n} \Phi_d(x). 
\end {equation*}
\end{lemma}

\begin{corollary} \label{productequality}  \cite{yansheng}
Let $a,b,n$ are positive integers. Then,
\begin{equation*}
a^n-b^n = \prod_{d \mid n} \Phi_d(a,b). 
\end {equation*}

\end{corollary}

\begin{lemma}\label{inequality} \cite{yansheng}
Let $p$ be a prime, $n\geqslant3$ be an integer and $x>0$. Then,
\begin{equation*}
(x-1)^{\phi(n)}<\Phi_{n}(x)<(x+1)^{\phi(n)}.
\end {equation*}
\end{lemma}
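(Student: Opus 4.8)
The plan is to use the product expansion of $\Phi_n$ over the primitive $n$th roots of unity and exploit the fact that, for $n\geqslant 3$, those roots are all non-real and occur in complex-conjugate pairs. (The prime $p$ appearing in the hypothesis plays no role in the statement and can be ignored.)

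First I would observe that for $n\geqslant 3$ the only real roots of unity, namely $\pm 1$, are not primitive $n$th roots, so every primitive $n$th root $\zeta=e^{2\pi i k/n}$ is non-real, and its conjugate $\bar\zeta$ is again a primitive $n$th root distinct from $\zeta$. Consequently $\phi(n)$ is even, and the $\phi(n)$ linear factors of $\Phi_n(x)=\prod_{\gcd(k,n)=1,\,1\leqslant k\leqslant n}(x-e^{2\pi i k/n})$ group into $\phi(n)/2$ conjugate pairs. For real $x$ each pair contributes $(x-\zeta)(x-\bar\zeta)=|x-\zeta|^2>0$, so $\Phi_n(x)=\prod_{\text{pairs}}|x-\zeta|^2$ is a product of $\phi(n)/2$ positive real factors; in particular $\Phi_n(x)>0$ for $x>0$.

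Next, for each pair I would write $\zeta=\cos\theta+i\sin\theta$ with $\sin\theta\neq 0$, so that $|x-\zeta|^2=(x-\cos\theta)^2+\sin^2\theta=x^2-2x\cos\theta+1$. Since $-1<\cos\theta<1$ and $x>0$, we get the strict chain $-2x<-2x\cos\theta<2x$, and adding $x^2+1$ throughout yields $(x-1)^2<|x-\zeta|^2<(x+1)^2$ for every pair. Multiplying these $\phi(n)/2$ strict inequalities gives $\big((x-1)^2\big)^{\phi(n)/2}<\Phi_n(x)<\big((x+1)^2\big)^{\phi(n)/2}$, and because $\phi(n)$ is even we have $\big((x\pm 1)^2\big)^{\phi(n)/2}=(x\pm 1)^{\phi(n)}$, which is exactly the desired bound.

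There is essentially no deep obstacle here; the proof is a clean computation. The one point worth flagging is the orientation of the lower bound when $0<x<1$: there $x-1<0$, so $(x-1)^{\phi(n)}$ might appear to be negative, but the even exponent makes it positive and equal to $\big((x-1)^2\big)^{\phi(n)/2}$, keeping the inequality correctly oriented. The genuinely essential input is that the roots are non-real, i.e. $\sin\theta\neq 0$ and hence $\cos\theta\neq\pm 1$, which together with $x>0$ is what forces every inequality to be \emph{strict}; this non-reality is precisely what the hypothesis $n\geqslant 3$ guarantees and is the reason the result fails for $n=1,2$.
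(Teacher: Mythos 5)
Your proof is correct and follows essentially the same route as the paper: both factor $\Phi_n(x)$ over the primitive $n$th roots of unity and bound each linear factor strictly between $|x-1|$ and $|x+1|$, with strictness coming from the fact that $\pm 1$ are not primitive $n$th roots when $n\geqslant 3$. Your version is somewhat more careful than the paper's --- grouping factors into conjugate pairs and using the evenness of $\phi(n)$ to handle the sign of $(x-1)^{\phi(n)}$ when $0<x<1$, a point the paper's terse argument glosses over --- but the underlying idea is identical.
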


\begin{corollary} \label{cor_ineq}
Let $a,b,n$ are positive integers and $n\geqslant3$. Then,
\begin{equation*}
(a-b)^{\phi(n)}<\Phi_{n}(a,b)<(a+b)^{\phi(n)}.
\end {equation*}
\end{corollary}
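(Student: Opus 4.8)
The plan is to reduce the two-variable statement directly to the single-variable inequality of Lemma~\ref{inequality} via the homogenization in Definition~\ref{tanim}. Since the paper maintains the standing convention $a>b$, I would set $x=a/b$, which is a real number with $x>1>0$, so Lemma~\ref{inequality} applies verbatim and gives
\begin{equation*}
\left(\frac{a}{b}-1\right)^{\phi(n)}<\Phi_n\!\left(\frac{a}{b}\right)<\left(\frac{a}{b}+1\right)^{\phi(n)}.
\end{equation*}

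First I would observe that every quantity appearing here is strictly positive: $a/b-1>0$ because $a>b$, and $a/b+1>0$ trivially, so the chain is a comparison of positive reals. Multiplying all three members by the positive constant $b^{\phi(n)}$ therefore preserves both strict inequalities, and it remains only to simplify each member. For the outer terms I would use that $\phi(n)$ is a nonnegative integer, so the exponent distributes over products: $b^{\phi(n)}(a/b\mp 1)^{\phi(n)} = \big(b(a/b\mp 1)\big)^{\phi(n)} = (a\mp b)^{\phi(n)}$. For the middle term I would invoke Definition~\ref{tanim}, namely $\Phi_n(a,b)=b^{\phi(n)}\Phi_n(a/b)$, so $b^{\phi(n)}\Phi_n(a/b)=\Phi_n(a,b)$. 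Substituting these three identities back yields exactly $(a-b)^{\phi(n)}<\Phi_n(a,b)<(a+b)^{\phi(n)}$, as claimed.

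There is essentially no hard step; the corollary is a routine specialization of Lemma~\ref{inequality}. The only point warranting a line of care is the positivity of $a/b-1$, which is precisely where the hypothesis $a>b$ enters: without it the left-hand factor could be nonpositive, and raising it to the power $\phi(n)$ would not interact correctly with the strict inequality or with the distributive identity for exponents. Provided that convention is in force, the entire argument is a single substitution followed by clearing the denominator $b^{\phi(n)}$.
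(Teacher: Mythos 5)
Your proof is correct and takes exactly the route the paper intends: the corollary is presented there without explicit proof as an immediate consequence of Lemma~\ref{inequality}, obtained by substituting $x=a/b$ via Definition~\ref{tanim} and multiplying through by $b^{\phi(n)}$, which is precisely your argument. Your added remark on the positivity of $a/b-1$ under the standing convention $a>b$ is a sound detail that the paper leaves implicit.
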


\begin{lemma} \cite{yansheng}
Let $p$ be a prime, $a$ and $b$ be distinct positive integers, and $n$ be a positive integer. Then,
\begin{equation*}
\Phi_{pn}(a,b) = \begin{cases}
      \Phi_{n}(a^p,b^p) & \text{ if $p \mid n$}\\\
      \frac{\Phi_{n}(a^p,b^p)}{\Phi_{n}(a,b)} & \text{ if $p \nmid n$}
    \end{cases}
\end {equation*}
\end{lemma}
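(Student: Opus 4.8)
The plan is to prove the identity by comparing the complex roots of the two sides, using only the product definition $\Phi_m(x) = \prod_{\gcd(k,m)=1,\,1\leqslant k\leqslant m}(x - e^{2 i \pi k/m})$ recalled above. Each $\Phi_m$ is monic and has exactly its $\phi(m)$ \emph{distinct} roots, the primitive $m$-th roots of unity; consequently two monic polynomials coincide as soon as they share the same multiset of roots. Since $\Phi_n$ is monic of degree $\phi(n)$, its composition $\Phi_n(x^p)$ is monic of degree $p\,\phi(n)$, and $\alpha$ is a root of $\Phi_n(x^p)$ exactly when $\alpha^p$ is a primitive $n$-th root of unity. The whole argument therefore reduces to identifying precisely which roots of unity $\alpha$ satisfy this condition.

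First I would carry out the governing order computation. If $\alpha$ has order $r$, then $\alpha^p$ has order $r/\gcd(r,p)$, so the requirement that $\alpha^p$ be a primitive $n$-th root of unity is equivalent to $r/\gcd(r,p) = n$. This admits only two solutions: either $p \nmid r$ and $r = n$ (which is possible only when $p \nmid n$), or $p \mid r$ and $r = pn$. In the converse direction I would record two facts: every primitive $pn$-th root $\beta$ is a root of $\Phi_n(x^p)$, because $\gcd(pn,p) = p$ forces $\beta^p$ to have order $n$; and, when $p \nmid n$, every primitive $n$-th root $\gamma$ is also a root, since then $\gamma^p$ again has order $n/\gcd(n,p) = n$.

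Assembling these facts splits into the two stated cases. When $p \mid n$, the alternative $r = n$ is excluded, so the roots of $\Phi_n(x^p)$ are exactly the primitive $pn$-th roots of unity; as both $\Phi_n(x^p)$ and $\Phi_{pn}(x)$ are monic with this same root set, they are equal, consistent with the degree identity $\phi(pn) = p\,\phi(n)$. When $p \nmid n$, the root set of $\Phi_n(x^p)$ is the disjoint union of the primitive $n$-th roots and the primitive $pn$-th roots (disjoint because the orders differ), so $\Phi_n(x^p) = \Phi_n(x)\,\Phi_{pn}(x)$, and dividing yields $\Phi_{pn}(x) = \Phi_n(x^p)/\Phi_n(x)$; here the degrees match via $\phi(pn) = (p-1)\phi(n)$.

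I expect the main obstacle to be the bookkeeping in the order computation, in particular verifying both directions of the correspondence (that each admissible order $r$ genuinely occurs and that no roots are double counted) and confirming disjointness of the two root families in the case $p \nmid n$. An alternative route, perhaps closer in spirit to the tools already assembled, would substitute $x^p$ for $x$ in Lemma~\ref{productequality} to get $x^{pn}-1 = \prod_{d \mid n}\Phi_d(x^p)$, compare it with $x^{pn}-1 = \prod_{e \mid pn}\Phi_e(x)$, and induct on $n$ to peel off factors; but the root-order analysis above is the most direct and avoids the inductive overhead.
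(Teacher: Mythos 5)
Your proposal is correct, and while it shares the paper's overall strategy---both sides are compared by identifying the complex roots of $\Phi_n(x^p)$ with primitive roots of unity---your mechanism for that identification is genuinely different. The paper expands $\Phi_n(x^p)$ into the double product $\prod_{\gcd(k,n)=1}\prod_{1\leqslant m\leqslant p}\bigl(x - e^{2i\pi m/p}e^{2i\pi k/(pn)}\bigr)$ and then constructs an explicit bijection between index pairs $(m,k)$ and residues $l$ modulo $pn$ via $l \equiv mn+k \pmod{pn}$, verifying injectivity by hand and sorting residues according to whether $\gcd(l,pn)$ equals $1$ or $p$; in the case $p \nmid n$ the residues with $\gcd(l,pn)=p$ are removed by dividing out $\Phi_n(x)$. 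You instead classify each root $\alpha$ by its multiplicative order $r$ and solve $r/\gcd(r,p)=n$, which produces the dichotomy $r=pn$ (always) or $r=n$ (exactly when $p\nmid n$) in one line, with the degree counts $\phi(pn)=p\,\phi(n)$, respectively $\phi(n)+\phi(pn)=p\,\phi(n)$, certifying that no roots are repeated or missed---the multiplicity issue you rightly flag as the main bookkeeping concern is indeed closed by this count, since a monic degree-$d$ polynomial with $d$ distinct roots must equal the product of the corresponding linear factors. Your route trades the paper's concrete congruence bookkeeping for the standard order-of-a-power formula and is shorter and arguably more transparent; the paper's version is more self-contained in that it never invokes that formula, establishing the correspondence between roots from scratch.
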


\begin{corollary}\label{imp_cor}
\cite{yansheng}
Let $p$ be a prime, $a$ and $b$ be distinct positive integers, and $n=p^\beta k$ for some positive integers $\beta , k $ with $p \nmid k$. Then,
\begin{equation*}
\Phi_{n}(a,b) = 
      \Phi_{p k}(a^{p^{\beta -1}},b^{p^{\beta -1}}) =
      \frac{\Phi_{k}(a^{p^{\beta }},b^{p^{\beta }})}{\Phi_k(a^{p^{\beta-1 }},b^{p^{\beta-1}})}.
\end {equation*}
\end{corollary}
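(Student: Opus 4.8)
The plan is to prove both equalities by iterating the two-variable recursion established just above (the corollary expressing $\Phi_{pn}(a,b)$ through $\Phi_n(a^p,b^p)$), which is the only tool needed. The first equality comes from repeatedly invoking the ``$p\mid n$'' branch to strip one factor of $p$ from the index at a time, and the last equality from a single use of the ``$p\nmid n$'' branch.

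First I would prove, by induction on $\beta$, that $\Phi_{p^\beta k}(a,b)=\Phi_{pk}\big(a^{p^{\beta-1}},b^{p^{\beta-1}}\big)$. The base case $\beta=1$ is immediate because $a^{p^0}=a$. For $\beta\geqslant2$ note that $p\mid p^{\beta-1}k$, so the first branch of the corollary gives $\Phi_{p^\beta k}(a,b)=\Phi_{p^{\beta-1}k}(a^p,b^p)$; applying the induction hypothesis to the exponent $\beta-1$ and the arguments $(a^p,b^p)$ then yields $\Phi_{pk}\big((a^p)^{p^{\beta-2}},(b^p)^{p^{\beta-2}}\big)=\Phi_{pk}\big(a^{p^{\beta-1}},b^{p^{\beta-1}}\big)$, as wanted.

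For the remaining equality I would apply the corollary once more to $\Phi_{pk}(c,d)$ with $c=a^{p^{\beta-1}}$ and $d=b^{p^{\beta-1}}$. Since $p\nmid k$, the second branch applies and produces
\begin{equation*}
\Phi_{pk}(c,d)=\frac{\Phi_k(c^p,d^p)}{\Phi_k(c,d)}=\frac{\Phi_k\big(a^{p^{\beta}},b^{p^{\beta}}\big)}{\Phi_k\big(a^{p^{\beta-1}},b^{p^{\beta-1}}\big)},
\end{equation*}
whose denominator reduces to $\Phi_k(a,b)$ exactly in the case $\beta=1$.

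I do not expect a genuine obstacle here: the content is entirely the bookkeeping of exponents, and the only point demanding care is the branch condition. One must check that each of the $\beta-1$ reduction steps legitimately sits in the ``$p\mid n$'' case (the index $p^{\beta-j}k$ is divisible by $p$ precisely while $j\leqslant\beta-1$) and that the divisibility hypothesis $p\nmid k$ is invoked only at the very last step, where the ``$p\nmid n$'' branch is the correct one.
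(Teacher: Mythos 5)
Your argument is correct, and it is exactly the induction the paper leaves unwritten: the corollary is stated with only the phrase ``from induction we have,'' so your iteration of the $p\mid n$ branch $\beta-1$ times, followed by one use of the $p\nmid n$ branch, is the intended proof, and your attention to which branch applies at each index $p^{\beta-j}k$ is the only delicate point.

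More importantly, your closing caveat is not a pedantic aside but a genuine correction to the paper: the second equality as printed, with denominator $\Phi_k(a,b)$, is \emph{false} whenever $\beta\geqslant 2$. As your computation shows, the single application of the $p\nmid k$ branch yields
\begin{equation*}
\Phi_{pk}\bigl(a^{p^{\beta-1}},b^{p^{\beta-1}}\bigr)=\frac{\Phi_{k}\bigl(a^{p^{\beta}},b^{p^{\beta}}\bigr)}{\Phi_{k}\bigl(a^{p^{\beta-1}},b^{p^{\beta-1}}\bigr)},
\end{equation*}
and the denominator cannot be replaced by $\Phi_k(a,b)$ except when $\beta=1$: for $p=2$, $k=1$, $\beta=2$ one has $\Phi_4(a,b)=a^2+b^2$, whereas $\Phi_1(a^4,b^4)/\Phi_1(a,b)=(a^4-b^4)/(a-b)=(a+b)(a^2+b^2)$. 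The corollary should therefore be restated with denominator $\Phi_k\bigl(a^{p^{\beta-1}},b^{p^{\beta-1}}\bigr)$. Fortunately the slip does not propagate into the main theorem: cases (i) and (ii)(a) of the main proof use only the first equality, and the quotient form is applied as an equality only in case (ii)(b), where $\beta=1$ and the printed statement coincides with the correct one.
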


\begin{lemma}\label{lte_p} \cite{yansheng}
Let $p$ be a prime, $a$ and $b$ be distinct positive integers not divisible to $p$, and $n$ be a positive integer. Let $k$ be the smallest positive integer satisfying $p \mid a^k-b^k$. Then,
\begin{equation*}
v_p(\Phi_{n}(a,b)) = \begin{cases}
      v_p(a^k-b^k) & \text{$n=k$,}\\\
      1 & \text{$n=p^\beta k,\, \beta \geqslant1  $, } \\\
      0 & \text{otherwise.}
    \end{cases}
\end {equation*}
\end{lemma}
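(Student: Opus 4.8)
The plan is to reduce everything to the two-variable factorization $a^n-b^n=\prod_{d\mid n}\Phi_d(a,b)$ (the Corollary to Lemma~\ref{productequality}) and then peel off the single factor $\Phi_n(a,b)$ by M\"obius inversion, feeding in the Lifting the Exponent Lemma to control the total valuation. First I would record the ``order'' principle: since $\Phi_n(a,b)\mid a^n-b^n$ and the set of exponents $m$ with $p\mid a^m-b^m$ is exactly the set of multiples of $k$ (indeed $k$ is the multiplicative order of $ab^{-1}$ in $(\mathbb{Z}/p)^\times$), we get $v_p(\Phi_n(a,b))=0$ whenever $k\nmid n$. This already disposes of one half of the ``else'' case, and it lets me discard from the product all $d\mid n$ that are not multiples of $k$.

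Assume now $k\mid n$ and write $n=ku$. Reparametrizing the surviving divisors as $d=ke$ with $e\mid u$ and taking $p$-adic valuations in the product gives
\[
v_p\bigl(a^{ku}-b^{ku}\bigr)=\sum_{e\mid u} v_p\bigl(\Phi_{ke}(a,b)\bigr).
\]
Set $f(e)=v_p(\Phi_{ke}(a,b))$ and $g(u)=v_p(a^{ku}-b^{ku})$, so that $g(u)=\sum_{e\mid u}f(e)$ on the divisor lattice of $u$. The next step computes $g$ explicitly. Because $k\mid p-1$ we have $p\nmid k$, and since $a^k\equiv b^k\not\equiv0\pmod p$ the LTE lemma (case $p$ odd) applied with $x=a^k$, $y=b^k$, $m=u$ yields $g(u)=v_p(a^k-b^k)+v_p(u)$, where I also use $v_p(u)=v_p(n)$.

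Now I would invert. By M\"obius inversion $f(u)=\sum_{e\mid u}\mu(u/e)\,g(e)$, and splitting $g(e)=V+v_p(e)$ with $V:=v_p(a^k-b^k)$ separates the computation into two standard sums. The constant part contributes $V\sum_{e\mid u}\mu(u/e)=V\,[u=1]$, while a short calculation shows $\sum_{e\mid u}\mu(u/e)\,v_p(e)$ equals $1$ exactly when $u$ is a positive power of $p$ and $0$ otherwise. Adding these recovers all three branches at once: $f(u)=V$ when $u=1$ (i.e. $n=k$), $f(u)=1$ when $u=p^{\beta}$ with $\beta\geq1$ (i.e. $n=p^{\beta}k$), and $f(u)=0$ for every other $u$ (the remaining ``else'' cases, where $k\mid n$ but $n/k$ is not a $p$-power). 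This single inversion proves the displayed formula for all odd $p$ and re-proves the $n=k$ line for free; Corollary~\ref{imp_cor} combined with the $n=k$ case offers an alternative route to the middle branch if one prefers to avoid M\"obius inversion.

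The step I expect to be genuinely delicate is $p=2$. There $k=1$ is forced (any two odd integers are congruent mod $2$), so $n=u$ and the LTE input changes to $g(u)=v_2(a-b)$ for odd $u$ and $g(u)=v_2(a^2-b^2)+v_2(u)-1$ for even $u$. Re-running the inversion with this $g$ shows the value $1$ is still correct for $n=2^{\beta}$ with $\beta\geq2$, but the term at $n=2$ comes out as $v_2(\Phi_2(a,b))=v_2(a+b)$, which exceeds $1$ precisely when $4\mid a+b$. Thus the one place the uniform pattern needs care is the factor $\Phi_2(a,b)=a+b$; notably this is exactly the quantity governing the $n=2$ exceptions $a+b=2^{s}3^{t}$ in Theorem~A, so in the applications it should be treated through the direct identity $\Phi_2(a,b)=a+b$ rather than through the generic formula.
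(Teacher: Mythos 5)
Your proposal is correct, and it reaches the lemma by a genuinely different route than the paper. The paper argues by strong induction on $n$: it assumes the stated valuations for every proper divisor $d \mid n$, substitutes them into $v_p(a^n-b^n)=\sum_{d\mid n}v_p(\Phi_d(a,b))$, evaluates the left side by LTE as $v_p(a^k-b^k)+\beta$ for $n=p^\beta m k$, and solves for the one unknown term $v_p(\Phi_n(a,b))$, splitting into the cases $m=1$ and $m>1$. You instead close that recursion in a single stroke: after disposing of $k\nmid n$ via the order argument, you restrict the divisor sum to $d=ke$ with $e\mid u$ (where $n=ku$), and apply M\"obius inversion to $g(u)=v_p(a^k-b^k)+v_p(u)$, using $\sum_{e\mid u}\mu(u/e)=[u=1]$ and $\sum_{e\mid u}\mu(u/e)\,v_p(e)=[\,u=p^\beta,\ \beta\geq 1\,]$. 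Both identities are routine and true (writing $u=p^\alpha w$ with $p\nmid w$, the second sum factors and only $\alpha\geq 1$, $w=1$ survives), though a final write-up should verify them explicitly rather than calling them ``a short calculation.'' The two arguments rest on the same two inputs, the factorization $a^n-b^n=\prod_{d\mid n}\Phi_d(a,b)$ and LTE, and your inversion is essentially the closed form of the paper's induction; what it buys is that all three branches fall out of one identity with no case split, while the paper's induction needs no M\"obius machinery at all.

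Your handling of $p=2$ is sharper than the lemma's literal statement, and rightly so. As you observe, with $k=1$ the inversion gives $v_2(\Phi_2(a,b))=v_2(a+b)$, which exceeds $1$ whenever $4\mid a+b$, so the lemma as stated (which permits $p=2$) fails at $n=2$. This is consistent with the paper's own structure: its proof of this lemma tacitly invokes the odd-prime case of LTE (the step $v_p(a^n-b^n)=v_p(a^k-b^k)+\beta$ is false for $p=2$), and the prime $2$ is repaired separately in Lemma~\ref{lte_2}, whose conclusion matches exactly what your re-run of the inversion produces: $v_2(a-b)$ at $n=1$, $v_2(a+b)$ at $n=2$, and $1$ at $n=2^\beta$ for $\beta\geq 2$. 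So your proof should be read as establishing the lemma for odd $p$, with the $p=2$ case deferred to Lemma~\ref{lte_2} --- which is precisely how the paper itself uses these results.
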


\begin{lemma}\label{lte_2} \cite{yansheng}
Let $a$ and $b$ be distinct odd positive integer and $n$ be a positive integer. Then,
\begin{equation*}
v_2\left(\Phi_{n}(a,b)\right) = 
    \begin{cases}
        v_2(a-b) & \text{$n=1$}\\\
        v_2(a+b) & \text{$n=2$}\\\
        1       & \text{$n=2^\beta,\, \beta \geqslant2  $ } \\\
        0       & \text{else.}
    \end{cases}
\end {equation*}
\end{lemma}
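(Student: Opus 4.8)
The plan is to prove \Cref{lte_2} by the same inductive strategy used in \Cref{lte_p}, but now carefully tracking the case $p=2$, where the Lifting the Exponent Lemma behaves differently for even and odd exponents. Since $a,b$ are odd, we always have $2 \mid a-b$, so the smallest $k$ with $2 \mid a^k - b^k$ is $k=1$. This is the structural reason the statement looks different from \Cref{lte_p}: the ``entry level'' is $n=1$, and the first power of $2$ genuinely contributing a primitive factor is $n=2$, which is why $n=2$ gets its own case with value $v_2(a+b)$ rather than merging into the $2^\beta$ family.

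First I would establish the base cases directly. For $n=1$, $\Phi_1(a,b)=a-b$, giving $v_2(a-b)$. For $n=2$, $\Phi_2(a,b)=a+b$, giving $v_2(a+b)$. These two together, via $a^2-b^2=\Phi_1(a,b)\Phi_2(a,b)$, also pin down $v_2(a^2-b^2)=v_2(a-b)+v_2(a+b)$, which is the quantity the even-exponent branch of LTE is phrased in terms of. Next, for the inductive step I would argue that if $2 \mid \Phi_n(a,b)$ then $2 \mid a^n-b^n$, and since $k=1$ this is automatic; the real constraint comes from \Cref{lte_p}-style analysis showing that $\Phi_n(a,b)$ is odd unless $n$ is a power of $2$. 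Concretely, for $n$ not a power of $2$ I expect to invoke \Cref{imp_cor} to reduce to a cyclotomic value at an odd index and conclude $v_2(\Phi_n(a,b))=0$, landing in the ``else'' case.

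The heart of the argument is the remaining case $n=2^\beta$ with $\beta\geqslant 2$, where I would use \Cref{productequality} together with the $p=2$ LTE. The identity
\begin{equation*}
v_2(a^{n}-b^{n}) = \sum_{d \mid n} v_2(\Phi_d(a,b))
\end{equation*}
specializes, for $n=2^\beta$, to a sum over $d\in\{1,2,4,\dots,2^\beta\}$. Applying LTE's even branch gives $v_2(a^{2^\beta}-b^{2^\beta}) = v_2(a^2-b^2)+\beta-1$. Plugging in the already-known values $v_2(\Phi_1)=v_2(a-b)$, $v_2(\Phi_2)=v_2(a+b)$, and the inductive hypothesis $v_2(\Phi_{2^\theta})=1$ for $2\leqslant\theta\leqslant\beta-1$, I would solve for the unknown top term $v_2(\Phi_{2^\beta}(a,b))$. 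Since $v_2(a^2-b^2)=v_2(a-b)+v_2(a+b)$, the contributions telescope and force $v_2(\Phi_{2^\beta}(a,b))=1$, completing the induction.

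The main obstacle I anticipate is purely bookkeeping in the $p=2$ LTE: one must be careful that LTE's even-exponent formula is written in terms of $v_2(a^2-b^2)$ rather than $v_2(a-b)$, so the index shift by $1$ (the ``$-1$'' in the LTE) must be matched precisely against the fact that the $\beta=1$ term $\Phi_2$ is handled as a base case rather than as part of the generic $2^\theta$ family. Getting this alignment right — and ensuring the off-by-one at $\theta=1$ versus $\theta\geqslant 2$ is consistent — is where the argument could slip, but it is otherwise a routine parallel to the proof of \Cref{lte_p}.
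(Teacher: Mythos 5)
Your proposal is correct and follows essentially the paper's own proof: the same induction driven by Lemma~\ref{productequality} (so that $v_2(a^n-b^n)=\sum_{d\mid n}v_2(\Phi_d(a,b))$) combined with the two-branch $p=2$ Lifting the Exponent Lemma, the same base cases $n=1,2$, and the same telescoping computation that forces $v_2(\Phi_{2^\beta}(a,b))=1$ for $\beta\geqslant 2$. The only minor deviation is the ``else'' case: you reduce $n=2^\beta m$ with odd $m>1$ to odd-index cyclotomic values via Corollary~\ref{imp_cor}, whereas the paper settles that case inside the same divisor-sum ledger (its $m>1$ branch); both routes are equally routine and valid.
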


\section{Results on Zsigmondy Primes}

\begin{lemma}\label{cyclotomic_cases}
Let $a>b$ be two relatively prime positive integers and  $n$ be a positive integer, $p$ be a prime divisor of $\Phi_n(a,b)$ and $k$ be the smallest positive integer satisfying $p \mid a^k-b^k$. Let $\gpf(n)$ denote the largest prime divisor of $n$, then one of the following holds:
\begin{enumerate}[(i)]
\item $p=2$ and $n=2^\beta $ for some $\beta \geqslant 1.$
\item  $p \geq 3$ and $n = k$ thus $p$ is a Zsigmondy prime for $(a,b,n).$
\item $p=\gpf(n) >2$ and $n=p^\beta k$ for some $\beta \geqslant 1$ and $v_p(\Phi_n(a,b))=1.$

\end{enumerate}

\end{lemma}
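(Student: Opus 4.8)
The plan is to reduce everything to the two exponent-valuation lemmas already established, Lemma~\ref{lte_p} and Lemma~\ref{lte_2}, and then to supply the one extra arithmetic observation that pins down the largest prime divisor of $n$. First I would record the basic consequence of coprimality: since $\Phi_n(a,b) \mid a^n-b^n$, the hypothesis $p \mid \Phi_n(a,b)$ gives $p \mid a^n-b^n$, and because $\gcd(a,b)=1$ this forces $p \nmid a$ and $p \nmid b$ (if $p$ divided one of them it would divide the other). Hence $ab^{-1}$ is a well-defined unit modulo $p$, the integer $k$ is exactly its multiplicative order, and the standard divisibility $k \mid n$ holds. I would then split the argument according to whether $p=2$ or $p \geq 3$.

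For $p=2$, coprimality makes both $a$ and $b$ odd, so Lemma~\ref{lte_2} applies directly: it shows $v_2(\Phi_n(a,b)) > 0$ only when $n$ is a power of $2$, which (for $n\geqslant 2$) is exactly conclusion (i). For $p \geq 3$, Lemma~\ref{lte_p} says $v_p(\Phi_n(a,b)) > 0$ forces either $n=k$ or $n=p^\beta k$ with $\beta \geq 1$, and in the latter case it additionally pins down $v_p(\Phi_n(a,b))=1$. The first alternative is precisely conclusion (ii), since $n=k$ means $p \nmid a^m-b^m$ for all $1 \leq m < n$, i.e. $p$ is a Zsigmondy prime of $(a,b,n)$. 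The second alternative matches all of conclusion (iii) except for the claim $p=P(n)$, which remains to be verified.

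The only substantive point, then, is to show that in the case $n=p^\beta k$ the prime $p$ is the largest prime divisor of $n$. The key observation is that $k \mid p-1$: indeed $k$ is the order of the unit $ab^{-1}$ in the group $(\mathbb{Z}/p\mathbb{Z})^\times$, which has order $p-1$, so Lagrange's theorem gives $k \mid p-1$ and in particular $k \leq p-1 < p$. Consequently every prime dividing $k$ is strictly smaller than $p$, while the prime factors of $n=p^\beta k$ are exactly $p$ together with those of $k$; hence $P(n)=p$. This completes conclusion (iii), and since the three cases exhaust the possibilities for a prime $p$ dividing $\Phi_n(a,b)$, the lemma follows. I expect this last divisibility $k \mid p-1$ to be the crux of the argument: it is short, but it is the one ingredient not already packaged inside the preceding valuation lemmas, and it is precisely what converts the bare factorization $n=p^\beta k$ into the sharp statement that $p$ is maximal.
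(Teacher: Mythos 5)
Your proposal is correct and follows essentially the same route as the paper's proof: both cases reduce to Lemma~\ref{lte_p} and Lemma~\ref{lte_2}, and the identification $p = P(n)$ in the case $n = p^\beta k$ comes from the same observation $k \mid p-1$, hence $k < p$, hence every prime factor of $k$ is below $p$. Your explicit check that $\gcd(a,b)=1$ forces $p \nmid ab$ (so that the valuation lemmas are actually applicable, with $k$ the order of $ab^{-1}$ modulo $p$) is a detail the paper leaves implicit, but it does not change the argument.
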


\begin{proof}
If $p=2$, by Lemma ~\ref{lte_2}, it follows that $n=2^\beta $ for some $\beta \geqslant 1$ .
If $p>2$, according to  Lemma ~\ref{lte_p}, there are two possibilities. Either $n=k$ or $n=p^\beta k$ holds.
When $n=k$, it implies that $p \nmid a^m-b^m$ for all $1 \leq m < n$, which means that $p$ is a Zsigmondy prime for $(a,b,n).$ 
Since $k$ is defined as the smallest positive integer such that 
 $p|a^k-b^k$, it is evident that $k\mid p-1$ holds. Moreover, it is clear that any prime divisor of $k$  must be smaller than $p$. Consequently, when $n=p^\beta k$, we can conclude that $p=\gpf(n)$ since no prime divisor of $n$ can be greater than $p$. Furthermore, according to Lemma \ref{lte_p}, we have $v_p(\Phi_n(a,b))=1$ in the case where $n=p^\beta k.$

\end{proof}

\begin{lemma} \label{cyclo_importance}
Let $a$ and $b$ be distinct, relatively prime positive integers and  $n\geqslant2$ be an integer. If $p$ is a Zsigmondy prime for $(a, b, n)$ then $p \mid \Phi_n(a,b)$.
\end{lemma}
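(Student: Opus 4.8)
The plan is to combine the factorization $a^n-b^n=\prod_{d\mid n}\Phi_d(a,b)$ with the Lifting-the-Exponent computation of Lemma~\ref{lte_p}. First I would pin down the arithmetic nature of $p$. Since $p$ is a Zsigmondy prime for $(a,b,n)$ it divides $a^n-b^n$; if $p$ divided $a$ it would also divide $b^n=a^n-(a^n-b^n)$ and hence $b$, contradicting $\gcd(a,b)=1$, so $p\nmid a$ and, symmetrically, $p\nmid b$. Moreover, because $n\geqslant2$ the Zsigmondy condition applied to $m=1$ gives $p\nmid a-b$; but $a$ and $b$ are both odd, so $a-b$ is even and $2\mid a-b$. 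Hence $p\neq2$, and $p$ is an odd prime dividing neither $a$ nor $b$ — exactly the hypotheses required to invoke Lemma~\ref{lte_p}.

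Next I would identify the parameter $k$ of Lemma~\ref{lte_p}, namely the least positive integer with $p\mid a^k-b^k$. The defining property of a Zsigmondy prime is precisely that $p\mid a^n-b^n$ while $p\nmid a^m-b^m$ for all $1\leqslant m\leqslant n-1$, so the least such exponent is $k=n$. Feeding $k=n$ into the first case of Lemma~\ref{lte_p} yields $v_p(\Phi_n(a,b))=v_p(a^k-b^k)=v_p(a^n-b^n)\geqslant1$, since $p\mid a^n-b^n$. Therefore $p\mid\Phi_n(a,b)$, as claimed.

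Alternatively, one can bypass Lemma~\ref{lte_p} entirely: from $p\mid a^n-b^n=\prod_{d\mid n}\Phi_d(a,b)$ and the primality of $p$, we get $p\mid\Phi_d(a,b)$ for some $d\mid n$; since $\Phi_d(a,b)\mid a^d-b^d$ this forces $p\mid a^d-b^d$, and the Zsigmondy condition rules out every $d<n$, leaving $d=n$. I would likely present the Lifting-the-Exponent version, as it meshes with the machinery already developed and also records the sharper fact that $v_p(\Phi_n(a,b))=v_p(a^n-b^n)$.

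As for the main obstacle, there is little genuine difficulty here: the content lies in correctly translating the Zsigmondy hypothesis into the statement $k=n$ and in verifying the side conditions ($p$ odd, $p\nmid a,b$) that license Lemma~\ref{lte_p}. The single point deserving care is the role of the oddness assumption, which is used only to force $p\neq2$, so that the odd-prime form of Lifting the Exponent applies rather than the exceptional $p=2$ case governed by Lemma~\ref{lte_2}; in the elementary factorization argument, oddness and $n\geqslant2$ are not even needed beyond ensuring that a genuine Zsigmondy prime is under discussion.
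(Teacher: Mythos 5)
Your proof is correct, but your preferred route is genuinely different from the paper's: the paper proves this lemma by exactly the three-line factorization argument you relegate to an ``alternative.'' Namely, from $p \mid a^n-b^n = \prod_{d\mid n}\Phi_d(a,b)$ and primality, $p \mid \Phi_d(a,b)$ for some $d \mid n$; since $\Phi_d(a,b) \mid a^d-b^d$, any $d<n$ contradicts the Zsigmondy property, so $d=n$. Your primary route via Lemma~\ref{lte_p} also works, and you take the right care with its side conditions: $p\nmid a$ and $p\nmid b$ (via coprimality, though in fact $p\mid a$ and $p\mid b$ would already give $p\mid a-b$, contradicting Zsigmondy at $m=1$), and $p\neq 2$ because $a,b$ odd forces $2\mid a-b$ while $n\geqslant 2$ gives $p\nmid a-b$ --- this exclusion matters, since Lemma~\ref{lte_p} as proved relies on the odd-prime form of LTE and fails for $p=2$ (where Lemma~\ref{lte_2} gives different valuations). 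Comparing the two: the paper's argument is shorter, invokes no valuation machinery, and, as you correctly observe, never uses the oddness hypothesis at all (which indeed looks vestigial --- the main proof later applies this lemma to general coprime $a,b$); your LTE route costs the extra hypothesis-checking but records the sharper conclusion $v_p(\Phi_n(a,b)) = v_p(a^n-b^n)$, which is the fact the paper implicitly needs later when it concludes $v_q(\Phi_n(a,b))=1$ for a Zsigmondy prime $q$ with $q^2 \nmid a^n-b^n$.
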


\begin{proof}
From Corollary ~\ref{productequality} we have 
\begin{equation*}
    a^n-b^n=
    \prod_{d\mid n}\Phi_d(a,b).
\end{equation*}
Therefore, such $p$ divides $\Phi_d(a,b)$ for some $d \mid n$. If $d < n$, then
    $p \mid \Phi_d(a,b)$, which implies  $p|a^d-b^d$.
This contradicts with $p$ being a Zsigmondy prime for $(a,b,n)$. We conclude that $d=n$, hence $ p \mid \Phi_n(a,b) $.

\end{proof}

\begin{lemma}
Let $a>b$ be relatively prime positive integers, and  $n\geqslant2$ be an integer. 
If $q$ is a Zsigmondy prime for $(a,b,n)$ but not a large Zsigmondy prime for $(a,b,n)$, then $n=q-1$.

\end{lemma}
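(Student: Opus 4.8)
The plan is to reduce the claim to the elementary fact that the multiplicative order of $ab^{-1}$ modulo $q$ divides $q-1$, and then to combine this bound with the failure of the ``large'' condition.

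First I would check that $q \nmid ab$, which is what makes the order argument available. Since $q$ is a Zsigmondy prime we have $q \mid a^n - b^n$; if $q \mid a$, then coprimality of $a$ and $b$ forces $q \nmid b$, whence $a^n - b^n \equiv -b^n \not\equiv 0 \pmod q$, a contradiction, and symmetrically $q \nmid b$. Thus $b$ is invertible modulo $q$ and the residue $t := ab^{-1} \pmod q$ is well defined and nonzero. (As a by-product this forces $q$ to be odd, since two odd residues would give $q=2\mid a-b$ and hence $n=1$.)

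Next I would identify $n$ as the order of $t$ in $(\mathbb{Z}/q\mathbb{Z})^{\times}$. For every $m \geq 1$ we have $q \mid a^m - b^m$ if and only if $t^m \equiv 1 \pmod q$, and the Zsigmondy hypothesis says that $n$ is exactly the least $m$ with $q \mid a^m - b^m$. Hence $n$ is precisely the multiplicative order of $t$. By Fermat's little theorem $t^{q-1} \equiv 1 \pmod q$, so this order divides $q-1$; in particular $n \mid q-1$, giving $n \leq q-1$, that is $q \geq n+1$.

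Finally I would invoke the hypothesis that $q$ is \emph{not} a large Zsigmondy prime. By definition a large Zsigmondy prime satisfies $q^2 \mid a^n - b^n$ or $q > n+1$; negating this, $q$ not large means $q^2 \nmid a^n - b^n$ and $q \leq n+1$. Comparing $q \leq n+1$ with the bound $q \geq n+1$ from the previous step yields $q = n+1$, i.e. $n = q-1$, as claimed. The argument is short and I do not anticipate a genuine obstacle; the only points needing care are the coprimality check that legitimises $b^{-1}$ and the correct reading of the negated ``large'' condition. It is worth noting that only the clause $q \leq n+1$ is actually used here, the clause $q^2 \nmid a^n - b^n$ being unnecessary for this particular conclusion.
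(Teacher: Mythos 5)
Your proof is correct and follows essentially the same route as the paper: the paper's one-line argument likewise combines $n \mid q-1$ (the multiplicative-order fact you spell out) with $q \leqslant n+1$ from the negation of the large condition. You simply make explicit the details — invertibility of $b$ modulo $q$ and Fermat's little theorem — that the paper leaves implicit.
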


\begin{proof}
Since $q$ is a Zsigmondy prime for $(a,b,n)$, $n \mid q-1;$ but since  $q$ is not large Zsigmondy prime for $(a,b,n)$, $q \leqslant n + 1$. Therefore $n=q-1$.

\end{proof}

\begin{lemma} \label{large_zsigmondy_existence}
Let $a$ and $b$ be distinct, relatively prime, positive integers, and  $n\geqslant3$ be an integer. Then there is a large Zsigmondy prime for $(a,b,n)$, if $(n+1) \gpf(n) <  \Phi_n(a,b).$

\end{lemma}

\begin{proof}
 Let us analyze the proof in two cases. 

 If $\Phi_n(a,b)$ is even, then $n=2^\beta$ for some $\beta\geq 2$ and $4\nmid \Phi_n(a,b)$ from \ref{lte_2}. Since $\Phi_n(a,b)>2(n+1)>2$, it has at least one odd prime divisor. Let $p$ be the greatest prime divisor of $\Phi_n(a,b)$. Since $p>2$ and $p\nmid n$, we obtain $n|p-1$ from \ref{lte_p}. If $p>n+1$, then $p$ is a large Zsigmondy prime for $(a,b,n).$ If $p=n+1$, then only odd prime divisor of $\Phi_n(a,b)$ is $p$. Since $4\nmid \Phi_n(a,b)$ and $\Phi_n(a,b)>2(n+1)$ we conclude that $p^2|\Phi_n(a,b),$ and therefore $p$ is a large Zsigmondy prime for $(a,b,n).$

 If $\Phi_n(a,b)$ is odd, it must have an odd prime divisor. Let $p$ be the greatest prime 
 divisor of $\Phi_n(a,b).$ From \ref{lte_p}, we obtain $n|p-1$ or $p|n$. If $n|p-1$ and 
 $p>n+1$ then $p$ is a large Zsigmondy prime for $(a,b,n).$ If $p=n+1$ and $p^2|\Phi_n(a,b)$ 
 then it is a large Zsigmondy prime for $(a,b,n).$ If $p=n+1$ and $p^2\nmid \Phi_n(a,b)$, 
 then $\Phi_n(a,b)$ must have another prime divisor $q$ because of the assumption
 $\Phi_n(a,b)>(n+1)\gpf(n).$ 
 Since $q<p$, this implies that $n\nmid q-1$, therefore $q$ is the greatest prime divisor of $n$ 
 because of \ref{lte_p}, and furthermore $q^2\nmid \Phi_n(a,b).$ Thus only prime divisors of 
 $\Phi_n(a,b)$ are $p$ and $q$. Also, their squares does not divide $\Phi_n(a,b)$. Ultimately $\Phi_n(a,b)=pq$ must hold but it contradicts with $\Phi_n(a,b)>(n+1)\gpf(n)$ since $p=n+1$ and $q=\gpf(n).$

\end{proof}

\vspace{5mm}

\begin{mainproof}

We begin by proving the existence of a large Zsigmondy prime for $(a,b,n)$, when $n$ is not equal to any of the numbers $\{2,4,6,10,12,18\}$.
Consider positive integers $a>b$ and $n>1$ with $\gcd(a,b)=1$. Let's assume that there is no large Zsigmondy prime for $(a,b,n)$. If there is no Zsigmondy prime for $(a,b,n)$, we can determine the possible values of $(a,b,n)$ based on Zsigmondy's theorem. We will specifically investigate the case where there is a Zsigmondy prime for $(a,b,n)$ but no large Zsigmondy prime for $(a,b,n).$

Let $n \geqslant 3$ and $q$ be a Zsigmondy prime for $(a,b,n)$ but $q$ is not a large Zsigmondy prime for $(a,b,n)$; therefore, 
$n=q-1$ and $q^2 \nmid a^n-b^n$. From Lemma \ref{cyclo_importance} we know that it is necessary for $q \mid \Phi_n(a,b)$ to hold. From Lemma \ref{cyclotomic_cases}, $\Phi_n(a,b)$ can have at most one non-Zsigmondy prime divisor $p$ with the possibilities $p=2$ or $p=\gpf(n)$. Now, we have three cases to consider:
\begin{enumerate}[(i)]
    \item $\Phi_n(a,b)=2 q$ and $n = 2^\beta$ where $\beta \geqslant 2$. In this case we have $q=2^\beta+1$ therefore it must be a Fermat prime so $\beta=2^s$ for some $s \geqslant1$. From Corollary ~\ref{imp_cor} we have
    \begin{equation*}
        \Phi_n(a,b)= 
        \Phi_2(a^{2^{\beta -1}},b^{2^{\beta -1}})=
        a^{2^{\beta -1}}+b^{2^{\beta -1}} \geqslant
        2^{2^{\beta -1}}+1. 
    \end{equation*}
For $\beta \geqslant 4$ we have $ 2^{2^{\beta -1}}+1 > 2 (2^\beta +1) $ therefore $\Phi_n(a,b) > 2(n+1) = 2 q$, leading to a contradiction with our assumption. We are left with two possibilities: $n=4$ or $n=8$. However, if $n=8$ then $q=n+1$ cannot be a prime. Therefore, the only possibility in this case is $n=4$.

\item $\Phi_n(a,b)=pq$, where $p=\gpf(n)>2$, is the greatest prime divisor of $n$. Then $n = p^\beta k$ where $\beta$ is a positive integer and $k$ is the smallest positive integer satisfying $p \mid a^k-b^k$. Clearly, $k \mid p-1$. \\ We can divide this case into two subcases.
\begin{enumerate}
    \item 

If $\beta \geqslant  2$, then by combining Corollary ~\ref{imp_cor} and Corollary ~\ref{cor_ineq}, we can get:
\begin{equation*}
    \Phi_n(a,b)= 
        \Phi_{pk}(a^{p^{\beta -1}},b^{p^{\beta -1}}) \geqslant
        (a^{p^{\beta -1}}-b^{p^{\beta -1}})^{\Phi(pk)}.
\end{equation*}
 Since $a>b$, we can derive the inequality,
\begin{equation*}
        (a^{p^{\beta -1}}-b^{p^{\beta -1}})^{\Phi(pk)} \geqslant
        (2^{p^{\beta -1}}-1)^{\Phi(pk)} \geqslant
        (2^{p^{\beta -1}}-1)^{p-1} \geqslant
         (2^{p-1}-1)^{p^{\beta -1}}.
\end{equation*} 
Since $k<p$  we have,
 \begin{equation*}
     \Phi_n(a,b)=
     p q =
     p (p^\beta k +1)  <
     p^{\beta +2 }. 
 \end{equation*}
Since $p \geqslant 3$, we have $2^{p -1} - 1 \geqslant p$, thus,
$$\Phi_n(a,b)\geq (2^{p-1}-1)^{p^{\beta-1}}\geq p^{p^{\beta-1}}.$$
Therefore $ \beta +2 > p^{\beta-1} $ must hold, which is not possible when 
$\beta \geqslant 3$. Therefore, $\beta \neq 2$, then a large Zsigmondy prime exists for $(a,b,n)$ in this case. Let's investigate the case $\beta = 2$. By substituting $\beta=2$ into our previous inequalities, we obtain,
\begin{equation*}
   p^4=p^{\beta +2} > \Phi_n(a,b) \geqslant
     (2^{p^{\beta -1}}-1)^{p-1}=
      (2^p-1)^{p-1} \geqslant
        (2^p-1)^2.
\end{equation*}
It is not possible when $p\geq 5$. Therefore, there exists a large Zsigmondy prime for $(a,b,n)$ when $p\geq 5$ in this case. So, in the second case, if there is no large Zsigmondy prime for $(a,b,n)$, then $p=3$, $\beta =2$, and $k=1$ or $k=2$.
Thus, the only exceptional values are $n=18$ and $n=9$. If $n=9$ then $n+1$ is not a prime, and $q=n+1$ is not a Zsigmondy prime for $(a,b,n)$. Therefore, the only possibility in this case is $n=18$. We will find the pairs $(a,b)$ at the end of the proof. 
\item If $\beta=1$, then by combining Corollary ~\ref{imp_cor} and Corollary ~\ref{cor_ineq}, we can obtain,
\begin{equation*}
    \Phi_n(a,b)=
    \Phi_{pk}(a,b)=
    \frac{\Phi_k(a^p,b^p)}{\Phi_k(a,b)} \geqslant
    \Big(\frac{a^p-b^p}{a+b}\Big)^{\phi(k)} \geqslant
     \Big(\frac{2^p-1}{3}\Big)^{\phi(k)}.
\end{equation*}
 In this case, $\Phi_n(a,b)=(p k + 1) p < p^3$ holds. Then either 
 $\frac{2^p-1}{3} < p$ or $\phi(k) < 3$. Which means either
  $p \leqslant 3$ or $k \leqslant6$. If $p=3$ then $n=6$. If $p>3$ then $\phi(k)\leq 2$ thus $k\in\{1,2,3,4,6\}$. 
  
  If $\phi(k)=2$ then $k\in\{3,4,6\}$ and
  $$p^3>\Phi_n(a,b)\geq \Big(\frac{2^p-1}{3}\Big)^2$$
holds which is not possible when $p\geq 7$. If $p=5$ then $k=4$ must hold since $k|p-1.$ But then $n=20$ so $q=n+1$ is not a Zsigmondy prime for $(a,b,n).$

If $\phi(k)=1$ then $k\in\{1,2\}$ and
$$p^3>\Phi_n(a,b)\geq \frac{2^p-1}{3}$$
holds which is not possible when $p\geq 13.$ If $k=1$ then $n=p$ holds. But then $q=n+1$ can not be a prime number. If $k=2$ then $n=2p$ holds. If $p=11$ then $q=23$ and $\Phi_n(a,b)=253.$ But this contradicts with the fact $\Phi_{10}(a,b)\geq \frac{2^{10}-1}{3} = 341.$ If $p=7$ then $n=14$ but then $q=n+1$ is not a prime number. Then $p=5$ and $n=10$ must hold. 

Ultimately only possible values are $n=6$ and $n=10$ in this case.
 Again we will handle the determination of pairs $(a,b)$ at the end of the proof.

\end{enumerate}

\item
$\Phi_n(a,b)=q$, where $q=n+1$, is an odd prime number. So, $n$ must be even. From  Corollary ~\ref{cor_ineq} and Corollary \ref{imp_cor}, we obtain,
\begin{equation*}
    \Phi_n(a,b)=
    \Phi_{q-1}(a,b) \geqslant
    \frac{\Phi_{\frac{q-1}{2}}(a^2,b^2)}{\Phi_2(a,b)}\geqslant
    \frac{(a^2-b^2)^{\phi(\frac{q-1}{2})}}{a+b}. 
\end{equation*}
We can further refine the inequality as follows:
\begin{equation*}
q =
    \Phi_n(a,b) \geqslant
    (a+b)^{\phi(\frac{q-1}{2}) - 1} \geqslant
    3^{\phi(\frac{q-1}{2}) - 1}.
\end{equation*}
We will show that, this inequality is satisfied only when $q\leqslant7$. From Lemma ~\ref{sqrtineq} we have $\phi(n) \geqslant \dfrac{1}{2} \sqrt{n}$. If we put this into the previous inequality, we get
\begin{equation*}
q \geqslant
 3^{\phi(\frac{q-1}{2}) - 1}   \geqslant
 3^{\frac{\sqrt{q-1} }{2} - 1}.
\end{equation*}
This is only possible when $q \leqslant 113$. By putting this back into the inequality we obtain 
\begin{equation*}
 3^5 >  q \geq    3^{\phi(\frac{q-1}{2})-1}. 
\end{equation*}
This holds only when $\phi(\frac{q-1}{2})\leqslant5$, which is only possible if $q-1$ has no prime divisors greater than $5$. By manually checking all the remaining possibilities of $q$, we can see that 
$$q \geq 3^{\phi(\frac{q-1}{2})-1}$$
is satisfied only when $q\leq 13$.
If we look up all the cases, we get $n=2,4,6,12,$ with only $n=12$ being new.
\end{enumerate}

Now we will determine all the triples $(a,b,n)$ such that there is no large Zsigmondy prime for $(a,b,n)$.
We will use the Lemma \ref{cyclotomic_cases} and Lemma \ref{large_zsigmondy_existence}  to analyze the cases. 
\begin{enumerate}[(i)]
    \item If $n=2$, and there is no large Zsigmondy prime for $(a,b,n)$, then no prime greater than $3$ can divide $a^n-b^n$ furthermore $9 \nmid a^n-b^n$. 
    Then $a+b= 2^s 3^t$ for non-negative integers $s,t$ such that
    $t=0,1$. The case $t=0$ is also an exceptional case of Zsigmondy's theorem. 
    \item  If $n=4$, then $\Phi_4(a,b)=a^2+b^2 \leqslant 10$ must hold. Furthermore we have $a^2+b^2 \in \{5,10 \}$. We can easily check that only possible values for $(a,b)$ are $(2,1)$ and $(3,1)$. 
     \item  If $n=6$, then $\Phi_6(a,b)=a^2-ab+b^2 \leqslant 21$ must hold. Furthermore we have $\Phi_6(a,b) \in \{7, 21 \}$. From this we get $(3,1),(3,2),(5,4)$ as suitable values of $(a,b)$. Also we have one exceptional case of Zsigmondy's Theorem here when $(a,b)=(2,1)$. 
     
      \item  If $n=10$, then $\Phi_{10}(a,b)=a^4-a^3b+a^2b^2-ab^2+b^4 \leqslant 55$ must hold. Furthermore we have $\Phi_{10}(a,b) \in \{11, 55 \}$. We can easily check that only possible value for $(a,b)$ is $(2,1)$.
     
       \item  If $n=12$, then $\Phi_{12}(a,b)=a^4-a^2b^2+b^4 \leqslant 39$ must hold. Furthermore we have $\Phi_{12}(a,b) \in \{13, 39 \}$. We can easily check that only possible value for $(a,b)$ is $(2,1)$.
       
       \item  If $n=18$, then $\Phi_6(a,b)=a^6-a^3b^3+b^6 \leqslant 57$ must hold. Furthermore we have $\Phi_{18}(a,b) \in \{19, 57 \}$. We can easily check that only possible value for $(a,b)$ is $(2,1)$.
       
\end{enumerate}

\end{mainproof}


\bibliographystyle{amsplain}

\begin{thebibliography}{9}
\bibitem{zsigmo} 
K. Zsigmondy, Zur Theorie der Potenzreste, Monatsch. Math. Phys. 3 (1892), 265-284

\bibitem{vandiver} 
G. D. Birkhoff and H. S. Vandiver, On the integral divisors of   $a^n - b^n$, Ann. of Math. (2) 5 (1904),
173-180.


\bibitem{feit}
W. Feit, On large Zsigmondy primes, Proc. Amer. Math. Soc. 102 (1988), 29-36. MR
89b:11009 
\bibitem{artin}
 E. Artin, The orders of the linear groups, Comm. Pure and Appl. Math. 8 (1955), 355-365.
Reprinted in Collected Papers, (edited by S. Lang and J. Tate), 387-397, Addison-Wesley,
Reading, Mass., 1965. MR 17:12d 

\bibitem{feit3} 
W. Feit, G.M. Seitz, On finite rational groups and related topics, Illinois J. Math. 33 (1988),
\bibitem{feit2} 
W. Feit, Extensions of cuspidal characters of $GL_m(q)$ Publ. Math. Debrecen 34 (1987),
273-297. MR 89d:20007 
103-131. MR 90a:20016 
\bibitem{roitman}
M. Roitman, “On Zsigmondy primes,” Proc. Am. Math. Soc., 125, No. 7, 1913-1919 (1997).
\bibitem{yansheng}
Yan Sheng: An Elementary Proof of Zsigmondy's Theorem,
\\\texttt{https://angyansheng.github.io/blog/an-elementary-proof-of-zsigmondys-theorem}
\end{thebibliography}

\end{document}